\newcommand{\mb}{\mathbf}
\newcommand{\mc}{\mathcal}
\title[A Trace Inequality]{A trace inequality for positive definite matrices}
\author{Elena-Veronica Belmega}
\address{Universit\'e Paris-Sud XI\\ SUPELEC\\ Laboratoire des signaux et syst\`emes\\ Gif-sur-Yvette, France. }
\email{belmega@lss.supelec.fr}
\urladdr{http://veronica.belmega.lss.supelec.fr}
\author{Samson Lasaulce}
\address{CNRS\\ SUPELEC \\ Laboratoire des signaux et syst\`emes\\ Gif-sur-Yvette, France. }
\email{lasaulce@lss.supelec.fr}
\urladdr{http://samson.lasaulce.lss.supelec.fr}
\author{M\'erouane Debbah}
\address{SUPELEC\\ Alcatel-Lucent Chair on Flexible Radio\\ Gif-sur-Yvette, France. }
\email{merouane.debbah@supelec.com}
\urladdr{http://www.supelec.fr/d2ri/flexibleradio/debbah/}
\keywords{Trace inequality, positive definite matrices, positive
semidefinite matrices} \subjclass[2000]{15A45}
\begin{document}

\begin{abstract}
In this note we prove that $\mathrm{Tr}\left\{ \mb{M} \mb{N} +
\mb{P} \mb{Q} \right\} \geq 0$ when the following two conditions are
met: (i) the matrices $\mb{M}, \mb{N}, \mb{P}, \mb{Q}$ are
structured as follows $\mb{M} = \mb{A} - \mb{B}$, $\mb{N} =
\mb{B}^{-1} - \mb{A}^{-1}$, $\mb{P} = \mb{C} - \mb{D}$, $\mb{Q} =
(\mb{B}+\mb{D})^{-1} -(\mb{A}+\mb{C})^{-1}$ (ii) $\mb{A}$, $\mb{B}$
are positive definite matrices and $\mb{C}$, $\mb{D}$ are positive
semidefinite matrices.
\end{abstract}

\maketitle

\section{Introduction}
\label{sec1}

Trace inequalities are useful in many applications. For example,
trace inequalities naturally arise in control theory (see e.g.,
\cite{lasserre-ac-1995}) and in communication systems with multiple
input and multiple output (see e.g., \cite{telatar-ett-1999}). In
this paper, the authors prove an inequality for which one
application has already been identified: the uniqueness of a pure
Nash equilibrium in concave games. Indeed, the reader will be able
to check that the proposed inequality allows one to generalize the
diagonally strict concavity condition introduced by Rosen in
\cite{rosen-eco-1965} to concave communication games with matrix
strategies \cite{belmega-wnc3-2008}.

Let us start with the scalar case. Let $\alpha, \beta, \gamma,
\delta$ be four reals such that $\alpha > 0, \beta >0 , \gamma \geq
0, \delta \geq 0$. Then, it can be checked that we have the
following inequality:
\begin{equation}
\label{equ1}
(\alpha-\beta)\left(\frac{1}{\beta}-\frac{1}{\alpha}\right) +
(\gamma-\delta)\left(\frac{1}{\beta+\delta}-\frac{1}{\alpha+\gamma}\right)
\geq 0.
\end{equation}
The main issue addressed here is to show that this inequality has a
matrix counterpart i.e., we want to prove the following theorem.
\begin{theorem}
\label{theorem} Let $\mb{A}$, $\mb{B}$ be two positive definite
matrices and $\mb{C}$, $\mb{D}$, two positive semidefinite matrices.
Then
\begin{equation}
\label{trace_2} \mc{T} = \mathrm{Tr}
\left\{(\mb{A}-\mb{B})(\mb{B}^{-1}-\mb{A}^{-1}) +
(\mb{C}-\mb{D})[(\mb{B} + \mb{D})^{-1}-(\mb{A}+\mb{C})^{-1}]\right\}
\geq 0.
\end{equation}
\end{theorem}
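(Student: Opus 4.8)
The plan is to turn $\mc{T}$ into a sum of two squared Frobenius norms plus a single bilinear cross term, and then absorb the cross term. Set $\mb{X}=\mb{A}-\mb{B}$ and $\mb{Y}=\mb{C}-\mb{D}$ (both symmetric), and $\mb{A}'=\mb{A}+\mb{C}$, $\mb{B}'=\mb{B}+\mb{D}$; these last two are positive definite and satisfy $\mb{B}\preceq\mb{B}'$, $\mb{A}\preceq\mb{A}'$. The elementary identity $\mb{M}^{-1}-\mb{N}^{-1}=\mb{M}^{-1}(\mb{N}-\mb{M})\mb{N}^{-1}$, applied with $(\mb{M},\mb{N})=(\mb{B},\mb{A})$ and with $(\mb{M},\mb{N})=(\mb{B}',\mb{A}')$ (so that $\mb{N}-\mb{M}=\mb{A}'-\mb{B}'=\mb{X}+\mb{Y}$), turns \eqref{trace_2} into
\[
\mc{T}=\mathrm{Tr}\{\mb{X}\mb{B}^{-1}\mb{X}\mb{A}^{-1}\}+\mathrm{Tr}\{\mb{Y}(\mb{B}')^{-1}\mb{X}(\mb{A}')^{-1}\}+\mathrm{Tr}\{\mb{Y}(\mb{B}')^{-1}\mb{Y}(\mb{A}')^{-1}\}.
\]

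Next I would introduce $\mb{R}=\mb{B}^{-1/2}\mb{X}\mb{A}^{-1/2}$, $\mb{U}=(\mb{B}')^{-1/2}\mb{X}(\mb{A}')^{-1/2}$, $\mb{V}=(\mb{B}')^{-1/2}\mb{Y}(\mb{A}')^{-1/2}$. Cyclicity of the trace together with the symmetry of $\mb{X}$ and $\mb{Y}$ gives $\mathrm{Tr}\{\mb{X}\mb{B}^{-1}\mb{X}\mb{A}^{-1}\}=\|\mb{R}\|_F^2$, $\mathrm{Tr}\{\mb{Y}(\mb{B}')^{-1}\mb{Y}(\mb{A}')^{-1}\}=\|\mb{V}\|_F^2$, and $\mathrm{Tr}\{\mb{Y}(\mb{B}')^{-1}\mb{X}(\mb{A}')^{-1}\}=\langle\mb{U},\mb{V}\rangle_F$, hence $\mc{T}=\|\mb{R}\|_F^2+\langle\mb{U},\mb{V}\rangle_F+\|\mb{V}\|_F^2$. (The first and third traces are squared norms, hence automatically $\ge0$; in particular this already recovers the classical inequality $\mathrm{Tr}\{(\mb{A}-\mb{B})(\mb{B}^{-1}-\mb{A}^{-1})\}\ge0$. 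The only term that can be negative is the cross term.)

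The crux is the estimate $\|\mb{U}\|_F\le\|\mb{R}\|_F$. Writing $\mb{X}=\mb{B}^{1/2}\mb{R}\mb{A}^{1/2}$ gives $\mb{U}=\big((\mb{B}')^{-1/2}\mb{B}^{1/2}\big)\,\mb{R}\,\big(\mb{A}^{1/2}(\mb{A}')^{-1/2}\big)$, and $\mb{B}\preceq\mb{B}'$, $\mb{A}\preceq\mb{A}'$ force the two bracketed factors to have spectral norm at most $1$ (indeed $(\mb{B}')^{-1/2}\mb{B}(\mb{B}')^{-1/2}\preceq\Id$ and similarly with $\mb{A}$), so submultiplicativity of $\|\cdot\|_F$ against the spectral norm yields $\|\mb{U}\|_F\le\|\mb{R}\|_F$. (Equivalently, one can obtain $\mathrm{Tr}\{\mb{X}(\mb{B}')^{-1}\mb{X}(\mb{A}')^{-1}\}\le\mathrm{Tr}\{\mb{X}\mb{B}^{-1}\mb{X}\mb{A}^{-1}\}$ directly from operator monotonicity of $\mb{S}\mapsto\mb{S}^{-1}$ and the fact that the trace of a product of two positive semidefinite matrices is nonnegative, replacing $(\mb{A}')^{-1}$ by $\mb{A}^{-1}$ and then $(\mb{B}')^{-1}$ by $\mb{B}^{-1}$ in two steps.) Granting this, Cauchy--Schwarz $|\langle\mb{U},\mb{V}\rangle_F|\le\|\mb{U}\|_F\|\mb{V}\|_F$ gives
\[
\mc{T}\ge\|\mb{U}\|_F^2+\langle\mb{U},\mb{V}\rangle_F+\|\mb{V}\|_F^2\ge\|\mb{U}\|_F^2-\|\mb{U}\|_F\|\mb{V}\|_F+\|\mb{V}\|_F^2\ge0,
\]
the last step since $\|\mb{U}\|_F^2-\|\mb{U}\|_F\|\mb{V}\|_F+\|\mb{V}\|_F^2=\big(\|\mb{U}\|_F-\tfrac12\|\mb{V}\|_F\big)^2+\tfrac34\|\mb{V}\|_F^2\ge0$.

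I expect the only genuine obstacle to be the noncommutative bookkeeping: verifying that the three traces really do collapse to $\|\mb{R}\|_F^2$, $\|\mb{V}\|_F^2$ and a true inner product (this is where symmetry of $\mb{X}$, $\mb{Y}$ and repeated cyclicity enter), and establishing $\|\mb{U}\|_F\le\|\mb{R}\|_F$ cleanly from the spectral-norm bound on $(\mb{B}')^{-1/2}\mb{B}^{1/2}$. The remaining ingredients --- the resolvent identity, Cauchy--Schwarz, and $a^2-ab+b^2\ge0$ --- are routine. Note also that this argument parallels the scalar proof of \eqref{equ1} term by term: $\|\mb{R}\|_F^2$ plays the role of $(\alpha-\beta)^2/(\alpha\beta)$, and the bound $\|\mb{U}\|_F\le\|\mb{R}\|_F$ is the matrix analogue of $(\beta+\delta)(\alpha+\gamma)\ge\alpha\beta$.
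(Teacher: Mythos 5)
Your proof is correct, and it reorganizes the argument in a genuinely different way from the paper, even though the two share their key technical ingredient. The paper first disposes of the trivial case $\mathcal{T}_2\geq 0$, then under the assumption $\mathcal{T}_2<0$ chains several rewritings and inequalities: it bounds $\mathcal{T}$ below by $\mathcal{T}'-\mathrm{Tr}\{(\mb{A}-\mb{B})(\mb{A}+\mb{C})^{-1}(\mb{C}-\mb{D})(\mb{B}+\mb{D})^{-1}\}$, shows the cross term is $\leq 0$ by exhibiting $\mathcal{T}_2$ as that same cross term plus $\mathrm{Tr}\{\mb{Y}\mb{Y}^H\}$, and handles $\mathcal{T}'\geq 0$ by its Lemma \ref{lemma_3}. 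You instead write the \emph{exact} identity $\mathcal{T}=\|\mb{R}\|_F^2+\langle\mb{U},\mb{V}\rangle_F+\|\mb{V}\|_F^2$ (which I have checked: the resolvent identities and the cyclic rearrangements are all correct, and your $\|\mb{V}\|_F^2$ is precisely the paper's $\mathrm{Tr}\{\mb{Y}\mb{Y}^H\}$), and then kill the cross term with Cauchy--Schwarz and $a^2-ab+b^2\geq 0$ after the contraction $\|\mb{U}\|_F\leq\|\mb{R}\|_F$. That contraction is exactly Lemma \ref{lemma_3} of the paper with $\mb{X}=\mb{A}-\mb{B}$, and your parenthetical two-step monotonicity argument for it is the paper's own proof of that lemma; your primary derivation via $\|(\mb{B}+\mb{D})^{-1/2}\mb{B}^{1/2}\|_2\leq 1$ and $\|\mb{M}\mb{N}\mb{P}\|_F\leq\|\mb{M}\|_2\|\mb{N}\|_F\|\mb{P}\|_2$ is an equally valid alternative. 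What your route buys is the elimination of the case analysis on the sign of $\mathcal{T}_2$ and a single transparent algebraic identity from which the classical case $\mb{C}=\mb{D}=\mb{0}$ (the paper's Lemma \ref{lemma_1}) drops out for free; the one point worth stating explicitly in the Hermitian (non-real) setting is that the cross term $\mathrm{Tr}\{\mb{V}^H\mb{U}\}$ is real --- which follows since $\mathcal{T}$, $\|\mb{R}\|_F^2$ and $\|\mb{V}\|_F^2$ all are --- so that Cauchy--Schwarz indeed gives $\langle\mb{U},\mb{V}\rangle_F\geq-\|\mb{U}\|_F\|\mb{V}\|_F$.
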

The closest theorem available in the literature corresponds to the
case $\mb{C} = \mb{D} = \mb{0}$, in which case the above theorem is
quite easy to prove. There are many proofs possible, the most simple
of them is probably the one provided by Abadir and Magnus in
\cite{abadir-book-2005}. In order to prove Theorem \ref{theorem} in
Sec. \ref{sec3} we will use some intermediate results which are
provided in the following section.

\section{Auxiliary Results}
\label{sec2}

Here we state three lemmas. The first two lemmas are available in
the literature and the last one is easy to prove. The first lemma is
the one mentioned in the previous section and corresponds to the case
$\mb{C} = \mb{D} = \mb{0}$.
\begin{lemma}\cite{abadir-book-2005}
\label{lemma_1} Let $\mb{A}$, $\mb{B}$ be two positive definite
matrices. Then
\begin{equation}
\label{trace_1} \mathrm{Tr}
\left\{(\mb{A}-\mb{B})(\mb{B}^{-1}-\mb{A}^{-1})\right\} \geq 0.
\end{equation}
\end{lemma}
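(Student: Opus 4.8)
The plan is to reduce the matrix inequality to a sum of nonnegative scalar terms by simultaneously diagonalizing the pair $(\mb{A},\mb{B})$ through a congruence transformation. First I would expand the product inside the trace. Since $\mb{A}\mb{A}^{-1}=\mb{B}\mb{B}^{-1}=\Id$, one obtains
\begin{equation}
(\mb{A}-\mb{B})(\mb{B}^{-1}-\mb{A}^{-1}) = \mb{A}\mb{B}^{-1}+\mb{B}\mb{A}^{-1}-2\Id,
\end{equation}
so that, denoting by $n$ the common size of the matrices,
\begin{equation}
\mathrm{Tr}\left\{(\mb{A}-\mb{B})(\mb{B}^{-1}-\mb{A}^{-1})\right\} = \mathrm{Tr}\left\{\mb{A}\mb{B}^{-1}\right\}+\mathrm{Tr}\left\{\mb{B}\mb{A}^{-1}\right\}-2n.
\end{equation}
The task thus reduces to proving $\mathrm{Tr}\{\mb{A}\mb{B}^{-1}\}+\mathrm{Tr}\{\mb{B}\mb{A}^{-1}\}\geq 2n$.

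Next, since $\mb{B}$ is positive definite it admits a positive definite square root $\mb{B}^{1/2}$, and I would introduce $\mb{X}=\mb{B}^{-1/2}\mb{A}\mb{B}^{-1/2}$, which is positive definite because it is a congruence of the positive definite matrix $\mb{A}$. Using the cyclic invariance of the trace gives $\mathrm{Tr}\{\mb{A}\mb{B}^{-1}\}=\mathrm{Tr}\{\mb{B}^{-1/2}\mb{A}\mb{B}^{-1/2}\}=\mathrm{Tr}\{\mb{X}\}$, and likewise, since $(\mb{B}^{-1/2}\mb{A}\mb{B}^{-1/2})^{-1}=\mb{B}^{1/2}\mb{A}^{-1}\mb{B}^{1/2}$, one gets $\mathrm{Tr}\{\mb{B}\mb{A}^{-1}\}=\mathrm{Tr}\{\mb{B}^{1/2}\mb{A}^{-1}\mb{B}^{1/2}\}=\mathrm{Tr}\{\mb{X}^{-1}\}$. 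Hence the quantity to be bounded equals $\mathrm{Tr}\{\mb{X}\}+\mathrm{Tr}\{\mb{X}^{-1}\}-2n$.

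Finally I would diagonalize. Let $\lambda_1,\dots,\lambda_n>0$ be the eigenvalues of $\mb{X}$; then $\lambda_1^{-1},\dots,\lambda_n^{-1}$ are those of $\mb{X}^{-1}$, so that
\begin{equation}
\mathrm{Tr}\{\mb{X}\}+\mathrm{Tr}\{\mb{X}^{-1}\}-2n=\sum_{i=1}^{n}\left(\lambda_i+\frac{1}{\lambda_i}-2\right)=\sum_{i=1}^{n}\frac{(\lambda_i-1)^2}{\lambda_i}\geq 0,
\end{equation}
each summand being nonnegative because $\lambda_i>0$. This is precisely the scalar inequality \eqref{equ1} with $\gamma=\delta=0$ applied to the generalized eigenvalues of the pencil $(\mb{A},\mb{B})$. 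I do not anticipate a genuine obstacle here: the only points requiring care are the existence of $\mb{B}^{1/2}$ (guaranteed by positive definiteness of $\mb{B}$) and the correct use of cyclic invariance to pass to the similar matrix $\mb{X}$; everything else follows from the elementary bound $t+t^{-1}\geq 2$ for $t>0$. As a by-product the argument also identifies the equality case, which holds iff every $\lambda_i=1$, i.e. iff $\mb{A}=\mb{B}$.
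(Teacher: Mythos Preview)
Your argument is correct and complete. The paper itself does not prove this lemma; it only cites Abadir and Magnus \cite{abadir-book-2005} and remarks that the case $\mb{C}=\mb{D}=\mb{0}$ is easy. The route implicit in the paper's later manipulations (see the proof of Theorem~\ref{theorem}) is slightly different from yours: one uses the resolvent identity $\mb{B}^{-1}-\mb{A}^{-1}=\mb{B}^{-1}(\mb{A}-\mb{B})\mb{A}^{-1}$ to write
\[
\mathrm{Tr}\left\{(\mb{A}-\mb{B})(\mb{B}^{-1}-\mb{A}^{-1})\right\}
=\mathrm{Tr}\left\{(\mb{A}-\mb{B})\mb{B}^{-1}(\mb{A}-\mb{B})\mb{A}^{-1}\right\}
=\mathrm{Tr}\left\{\mb{Y}\mb{Y}^{H}\right\}\ge 0,
\]
with $\mb{Y}=\mb{A}^{-1/2}(\mb{A}-\mb{B})\mb{B}^{-1/2}$, which avoids any appeal to eigenvalues. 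Your simultaneous-diagonalization approach is equally valid and has the merit of making the equality case $\mb{A}=\mb{B}$ transparent via $\lambda_i=1$ for all $i$.
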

The second lemma is very simple and can be found, for example, in
\cite{abadir-book-2005}. It is as follows.
\begin{lemma}\cite{abadir-book-2005}
\label{lemma_4} Let $\mb{M}$ and $\mb{N}$ be two positive
semidefinite matrices. Then
\begin{equation}
\mathrm{Tr}\{\mb{M}\mb{N}\} \geq 0.
\end{equation}
\end{lemma}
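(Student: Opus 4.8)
The plan is to reduce the statement to the elementary fact that the trace of a positive semidefinite matrix is nonnegative, using a symmetrizing congruence. First I would use the existence of a positive semidefinite square root: since $\mb{M}$ is positive semidefinite, there is a (Hermitian) positive semidefinite matrix $\mb{M}^{1/2}$ satisfying $\mb{M}^{1/2}\mb{M}^{1/2}=\mb{M}$. Invoking the cyclic invariance of the trace, I would then rewrite the product as
\begin{equation}
\mathrm{Tr}\{\mb{M}\mb{N}\}
=\mathrm{Tr}\{\mb{M}^{1/2}\mb{M}^{1/2}\mb{N}\}
=\mathrm{Tr}\{\mb{M}^{1/2}\mb{N}\mb{M}^{1/2}\}.
\end{equation}

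The key observation is that the matrix $\mb{M}^{1/2}\mb{N}\mb{M}^{1/2}$ appearing on the right is itself positive semidefinite. Indeed, it is a congruence transform $\mb{X}^{*}\mb{N}\mb{X}$ of $\mb{N}$ with $\mb{X}=\mb{M}^{1/2}$, since $\mb{M}^{1/2}$ is Hermitian; thus for every vector $\mb{v}$ one has $\mb{v}^{*}\mb{M}^{1/2}\mb{N}\mb{M}^{1/2}\mb{v} =(\mb{M}^{1/2}\mb{v})^{*}\mb{N}(\mb{M}^{1/2}\mb{v})\geq 0$ because $\mb{N}$ is positive semidefinite. Consequently all eigenvalues of $\mb{M}^{1/2}\mb{N}\mb{M}^{1/2}$ are nonnegative, and as the trace equals the sum of the eigenvalues we conclude $\mathrm{Tr}\{\mb{M}^{1/2}\mb{N}\mb{M}^{1/2}\}\geq 0$, which together with the identity above gives $\mathrm{Tr}\{\mb{M}\mb{N}\}\geq 0$.

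An alternative route avoids the square root altogether: writing a spectral decomposition $\mb{M}=\sum_i \lambda_i \mb{u}_i\mb{u}_i^{*}$ with $\lambda_i\geq 0$ and using linearity and cyclicity of the trace gives $\mathrm{Tr}\{\mb{M}\mb{N}\}=\sum_i \lambda_i\, \mb{u}_i^{*}\mb{N}\mb{u}_i\geq 0$, each summand being nonnegative. Either way, there is no genuine obstacle here; the only points needing care are the two standard structural facts that a positive semidefinite matrix possesses a positive semidefinite square root (or a spectral decomposition with nonnegative eigenvalues) and that a congruence preserves positive semidefiniteness. Since both are classical, the lemma follows immediately.
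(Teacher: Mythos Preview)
Your argument is correct: the square-root plus cyclicity reduction to $\mathrm{Tr}\{\mb{M}^{1/2}\mb{N}\mb{M}^{1/2}\}\geq 0$ is the standard proof, and your alternative via the spectral decomposition is equally valid. Note that the paper does not actually prove this lemma; it merely quotes it from \cite{abadir-book-2005}, so there is no in-paper argument to compare against. Your write-up would serve perfectly well as the omitted justification.
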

At last, we will need the following result.
\begin{lemma}
\label{lemma_3} Let $\mb{A}$, $\mb{B}$ be two positive definite
matrices, $\mb{C}$, $\mb{D}$, two positive semidefinite matrices
whereas $\mb{X}$ is only assumed to be Hermitian. Then
\begin{equation}
\mathrm{Tr} \left\{\mb{X}\mb{A}^{-1}\mb{X}\mb{B}^{-1}\right\} -
\mathrm{Tr}
\left\{\mb{X}(\mb{A}+\mb{C})^{-1}\mb{X}(\mb{B}+\mb{D})^{-1}\right\}
\geq 0.
\end{equation}
\end{lemma}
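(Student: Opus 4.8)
The plan is to deduce this from Lemma~\ref{lemma_4} by combining the operator monotonicity of matrix inversion with a telescoping identity, so that no genuinely new estimate is needed. First I would record the two elementary facts the argument rests on. Since $\mb{C}\succeq\mb{0}$ we have $\mb{A}+\mb{C}\succeq\mb{A}\succ\mb{0}$, and because $\mb{S}\mapsto\mb{S}^{-1}$ is operator decreasing on the positive definite cone (equivalently, $\mb{A}^{-1/2}(\mb{A}+\mb{C})\mb{A}^{-1/2}=\Id+\mb{A}^{-1/2}\mb{C}\mb{A}^{-1/2}\succeq\Id$, and one inverts), it follows that $(\mb{A}+\mb{C})^{-1}\preceq\mb{A}^{-1}$; likewise $(\mb{B}+\mb{D})^{-1}\preceq\mb{B}^{-1}$. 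Second, whenever $\mb{Y}\succeq\mb{0}$ and $\mb{X}$ is Hermitian, the matrix $\mb{X}\mb{Y}\mb{X}$ is positive semidefinite, since $\mb{v}^{\dagger}\mb{X}\mb{Y}\mb{X}\mb{v}=(\mb{X}\mb{v})^{\dagger}\mb{Y}(\mb{X}\mb{v})\geq 0$ for every vector $\mb{v}$.

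Next I would abbreviate $\mb{A}_1=\mb{A}^{-1}$, $\mb{A}_2=(\mb{A}+\mb{C})^{-1}$, $\mb{B}_1=\mb{B}^{-1}$, $\mb{B}_2=(\mb{B}+\mb{D})^{-1}$, so that $\mb{A}_1-\mb{A}_2\succeq\mb{0}$ and $\mb{B}_1-\mb{B}_2\succeq\mb{0}$, while $\mb{A}_1\succ\mb{0}$ and $\mb{B}_2\succ\mb{0}$. The quantity to be shown nonnegative is $\mathrm{Tr}\{\mb{X}\mb{A}_1\mb{X}\mb{B}_1\}-\mathrm{Tr}\{\mb{X}\mb{A}_2\mb{X}\mb{B}_2\}$, and the crux is the telescoping decomposition
\[
\mb{X}\mb{A}_1\mb{X}\mb{B}_1-\mb{X}\mb{A}_2\mb{X}\mb{B}_2
=\mb{X}\mb{A}_1\mb{X}(\mb{B}_1-\mb{B}_2)+\mb{X}(\mb{A}_1-\mb{A}_2)\mb{X}\mb{B}_2,
\]
obtained by adding and subtracting $\mb{X}\mb{A}_1\mb{X}\mb{B}_2$. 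Taking the trace splits the expression into two pieces.

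For the first piece, $\mb{X}\mb{A}_1\mb{X}\succeq\mb{0}$ by the second elementary fact and $\mb{B}_1-\mb{B}_2\succeq\mb{0}$, so Lemma~\ref{lemma_4} gives $\mathrm{Tr}\{\mb{X}\mb{A}_1\mb{X}(\mb{B}_1-\mb{B}_2)\}\geq 0$. For the second piece, $\mb{X}(\mb{A}_1-\mb{A}_2)\mb{X}\succeq\mb{0}$ and $\mb{B}_2\succ\mb{0}$, so Lemma~\ref{lemma_4} again gives $\mathrm{Tr}\{\mb{X}(\mb{A}_1-\mb{A}_2)\mb{X}\mb{B}_2\}\geq 0$; summing the two inequalities finishes the argument. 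The only point that needs more than one line is the operator monotonicity of inversion, but that is a classical fact, so I do not anticipate a real obstacle — the content of the lemma is essentially the telescoping trick together with two invocations of Lemma~\ref{lemma_4}.
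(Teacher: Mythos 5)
Your proof is correct and is essentially the paper's own argument: the paper also reduces the claim to $\mathrm{Tr}\{\mb{X}\mb{A}^{-1}\mb{X}(\mb{B}^{-1}-(\mb{B}+\mb{D})^{-1})\}\geq 0$ and $\mathrm{Tr}\{(\mb{A}^{-1}-(\mb{A}+\mb{C})^{-1})\mb{X}(\mb{B}+\mb{D})^{-1}\mb{X}\}\geq 0$ via the intermediate term $\mathrm{Tr}\{\mb{X}\mb{A}^{-1}\mb{X}(\mb{B}+\mb{D})^{-1}\}$, which is exactly your telescoping decomposition combined with two applications of Lemma~\ref{lemma_4} and the antitonicity of inversion.
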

\begin{proof}
First note that $\mb{A}+\mb{C}\succeq \mb{A}$ implies (see e.g.,
\cite{horn-book-1991}) that $\mb{A}^{-1}\succeq(\mb{A}+\mb{C})^{-1}\succeq 0$ and that
$\mb{A}^{-1}-(\mb{A}+\mb{C})^{-1}\succeq
0$. In a
similar way we have $\mb{B}^{-1}-(\mb{B}+\mb{D})^{-1}\succeq 0$.
Therefore we obtain the following two inequalities:
\begin{equation}
\begin{array}{lcl}
\mathrm{Tr} \left\{\mb{X}\mb{A}^{-1}\mb{X}\mb{B}^{-1}\right\} &
\stackrel{(a)}{\geq}& \mathrm{Tr}
\left\{\mb{X}\mb{A}^{-1}\mb{X}(\mb{B}+\mb{D})^{-1}\right\}\\
\mathrm{Tr}
\left\{\mb{A}^{-1}\mb{X}(\mb{B}+\mb{D})^{-1}\mb{X}\right\}
&\stackrel{(b)}{\geq} & \mathrm{Tr}
\left\{(\mb{A}+\mb{C})^{-1}\mb{X}(\mb{B}+\mb{D})^{-1}\mb{X}\right\}
\end{array}
\end{equation}
where (a) follows by applying Lemma \ref{lemma_4} with
$\mb{M}=\mb{X}\mb{A}^{-1}\mb{X}$ and
$\mb{N}=\mb{B}^{-1}-(\mb{B}+\mb{D})^{-1} \succeq 0$ and (b) follows
by applying the same lemma with
$\mb{M}=\mb{A}^{-1}-(\mb{A}+\mb{C})^{-1} \succeq 0$ and
$\mb{N}=\mb{X}(\mb{B}+\mb{D})^{-1}\mb{X}$. Using the fact that
$\mathrm{Tr}
\left\{\mb{X}\mb{A}^{-1}\mb{X}(\mb{B}+\mb{D})^{-1}\right\}=
\mathrm{Tr}
\left\{\mb{A}^{-1}\mb{X}(\mb{B}+\mb{D})^{-1}\mb{X}\right\}$ we
obtain the desired result.
\end{proof}

\section{Proof of Theorem \ref{theorem}}
\label{sec3}

Let us define the auxiliary quantities $\mc{T}_1$ and $\mc{T}_2$ as
$\mc{T}_1 \triangleq \mathrm{Tr}
\left\{(\mb{A}-\mb{B})(\mb{B}^{-1}-\mb{A}^{-1})\right\}$ and
$\mc{T}_2 \triangleq \mathrm{Tr} \left\{(\mb{C}-\mb{D})[(\mb{B} +
\mb{D})^{-1}-(\mb{A}+\mb{C})^{-1}]\right\}$. Assuming $\mc{T}_2 \geq
0$ directly implies that $\mc{T} = \mc{T}_1 + \mc{T}_2 \geq 0$ since
$\mc{T}_1$ is always non-negative after Lemma \ref{lemma_1}. As a
consequence, we will only consider, from now on, the non-trivial
case where $\mc{T}_2 < 0$ (Assumption (A)).

First we rewrite $\mc{T}$ as:
\begin{equation}
\label{equ_1}
\begin{array}{lcl}
\mc{T}& =& \mathrm{Tr}
\left\{(\mb{A}-\mb{B})(\mb{B}^{-1}-\mb{A}^{-1})\right\} +
\mathrm{Tr} \left\{[(\mb{A}+\mb{C})-(\mb{B}+\mb{D})][(\mb{B} +
\mb{D})^{-1}-(\mb{A}+\mb{C})^{-1}]\right\} - \\
& & \mathrm{Tr} \left\{(\mb{A}-\mb{B})[(\mb{B} +
\mb{D})^{-1}-(\mb{A}+\mb{C})^{-1}]\right\} \\
& \stackrel{(c)}{\geq} &\mathrm{Tr}
\left\{(\mb{A}-\mb{B})(\mb{B}^{-1}-\mb{A}^{-1})\right\} -
 \mathrm{Tr} \left\{(\mb{A}-\mb{B})[(\mb{B} +
\mb{D})^{-1}-(\mb{A}+\mb{C})^{-1}]\right\} \\
& = & \mathrm{Tr}
\left\{(\mb{A}-\mb{B})\mb{B}^{-1}(\mb{A}-\mb{B})\mb{A}^{-1}\right\}
- \mathrm{Tr} \left\{(\mb{A}-\mb{B})(\mb{A}+\mb{C})^{-1}[(\mb{A} +
\mb{C})-(\mb{B}+\mb{D})](\mb{B} + \mb{D})^{-1}\right\} \\
& = & \mathrm{Tr}
\left\{(\mb{A}-\mb{B})\mb{B}^{-1}(\mb{A}-\mb{B})\mb{A}^{-1}\right\}
- \mathrm{Tr} \left\{(\mb{A}-\mb{B})(\mb{A}+\mb{C})^{-1}(\mb{A} -
\mb{B})(\mb{B} + \mb{D})^{-1}\right\} - \\
& &  \mathrm{Tr} \left\{(\mb{A}-\mb{B})(\mb{A}+\mb{C})^{-1}(\mb{C}
-\mb{D})(\mb{B} + \mb{D})^{-1}\right\}
\end{array}
\end{equation}
where (c) follows from Lemma \ref{lemma_1}. We see from the last
equality that if we can prove that $\mathrm{Tr}
\left\{(\mb{A}-\mb{B})(\mb{A}+\mb{C})^{-1}(\mb{C} -\mb{D})(\mb{B} +
\mb{D})^{-1}\right\} \leq 0$, proving $\mc{T} \geq 0$ boils down to
showing that
\begin{equation}
\mc{T}' \triangleq \mathrm{Tr}
\left\{(\mb{A}-\mb{B})\mb{B}^{-1}(\mb{A}-\mb{B})\mb{A}^{-1}\right\}
- \mathrm{Tr} \left\{(\mb{A}-\mb{B})(\mb{A}+\mb{C})^{-1}(\mb{A} -
\mb{B})(\mb{B} + \mb{D})^{-1}\right\} \geq 0.
\end{equation}
Let us show that $\mathrm{Tr}
\left\{(\mb{A}-\mb{B})(\mb{A}+\mb{C})^{-1}(\mb{C} -\mb{D})(\mb{B} +
\mb{D})^{-1}\right\} \leq 0$. By assumption we have that
$\mathrm{Tr} \left\{(\mb{C}-\mb{D})[(\mb{B} +
\mb{D})^{-1}-(\mb{A}+\mb{C})^{-1}]\right\} < 0$ which is equivalent
to
\begin{equation}
\mathrm{Tr} \left\{(\mb{A}-\mb{B})[(\mb{B} +
\mb{D})^{-1}-(\mb{A}+\mb{C})^{-1}]\right\} > \mathrm{Tr}
\left\{[(\mb{A}+\mb{C})-(\mb{B}+\mb{D})][(\mb{B} +
\mb{D})^{-1}-(\mb{A}+\mb{C})^{-1}]\right\}.
\end{equation}
From this inequality and Lemma \ref{lemma_1} we have that
\begin{equation}
\mathrm{Tr} \left\{(\mb{A}-\mb{B})[(\mb{B} +
\mb{D})^{-1}-(\mb{A}+\mb{C})^{-1}]\right\} > 0.
\end{equation}
On the other hand, let us rewrite $\mc{T}_2$ as
\begin{equation}
\label{equ_3}
\begin{array}{lcl}
\mc{T}_2 & = & \mathrm{Tr} \left\{(\mb{C}-\mb{D})(\mb{B} +
\mb{D})^{-1}[(\mb{A} -
\mb{B})+(\mb{C}-\mb{D})](\mb{A}+\mb{C})^{-1}\right\}\\
 & = & \mathrm{Tr} \left\{(\mb{C}-\mb{D})(\mb{B} + \mb{D})^{-1}(\mb{A} -
\mb{B})(\mb{A}+\mb{C})^{-1}\right\} + \\
& & \mathrm{Tr} \left\{(\mb{C}-\mb{D})(\mb{B} + \mb{D})^{-1}
(\mb{C}-\mb{D})(\mb{A}+\mb{C})^{-1}\right\} \\
& = &  \mathrm{Tr} \left\{(\mb{C}-\mb{D})(\mb{B} +
\mb{D})^{-1}(\mb{A} - \mb{B})(\mb{A}+\mb{C})^{-1}\right\} +
\mathrm{Tr} [\mb{Y}\mb{Y}^H]
\end{array}
\end{equation}
where $\mb{Y}= (\mb{A}+\mb{C})^{-1/2}(\mb{C}-\mb{D})(\mb{B} +
\mb{D})^{-1/2}$. Thus $\mc{T}_2 <0$ implies that:
\begin{equation}
\mathrm{Tr} \left\{(\mb{C}-\mb{D})(\mb{B} + \mb{D})^{-1}(\mb{A} -
\mb{B})(\mb{A}+\mb{C})^{-1}\right\} <0,
\end{equation}
which is exactly the desired result since $ \mathrm{Tr}
\left\{(\mb{C}-\mb{D})(\mb{B} + \mb{D})^{-1}(\mb{A} -
\mb{B})(\mb{A}+\mb{C})^{-1}\right\} = \mathrm{Tr}
\left\{(\mb{A}-\mb{B})(\mb{A}+\mb{C})^{-1}(\mb{C} -\mb{D})(\mb{B} +
\mb{D})^{-1}\right\} $. In order ton conclude the proof we only need
to prove that $\mc{T}' \geq 0$. This is ready by noticing that
$\mc{T}'$ can be rewritten as $\mc{T}' \triangleq \mathrm{Tr}
\left\{(\mb{A}-\mb{B})\mb{A}^{-1} (\mb{A}-\mb{B})\mb{B}^{-1}\right\}
- \mathrm{Tr} \left\{(\mb{A}-\mb{B})(\mb{A}+\mb{C})^{-1}(\mb{A} -
\mb{B})(\mb{B} + \mb{D})^{-1}\right\}$ and calling for Lemma
\ref{lemma_3} with $\mb{X} = \mb{A} - \mb{B}$, concluding the proof.

\end{document}